\definecolor{refkey}{gray}{.45}
\definecolor{labelkey}{gray}{.45}
\newsavebox\myboxA
\newsavebox\myboxB
\newlength\mylenA
\newcommand*\xoverline[2][0.75]{%
    \sbox{\myboxA}{$\m@th#2$}%
    \setbox\myboxB\null
    \ht\myboxB=\ht\myboxA%
    \dp\myboxB=\dp\myboxA%
    \wd\myboxB=#1\wd\myboxA
    \sbox\myboxB{$\m@th\overline{\copy\myboxB}$}
    \setlength\mylenA{\the\wd\myboxA}
    \addtolength\mylenA{-\the\wd\myboxB}%
    \ifdim\wd\myboxB<\wd\myboxA%
       \rlap{\hskip 0.5\mylenA\usebox\myboxB}{\usebox\myboxA}%
    \else
        \hskip -0.5\mylenA\rlap{\usebox\myboxA}{\hskip 0.5\mylenA\usebox\myboxB}%
    \fi}
\newcommand\wtilde[1]{\overset{\lower.4ex\hbox{$\scriptstyle \sim$}}{#1}}
\newcommand\wst[1]{\overset{\lower.5ex\hbox{$\scriptscriptstyle \sim$}}{#1}}
\newcommand{\blb}{\raise.3ex\hbox{$\scriptstyle \pmb \lbrack$}}
\newcommand{\sblb}{\raise.1ex\hbox{$\scriptscriptstyle \pmb \lbrack$}}
\newcommand{\brb}{\raise.3ex\hbox{$\scriptstyle \pmb \rbrack$}}
\newcommand{\sbrb}{\raise.1ex\hbox{$\scriptscriptstyle \pmb \rbrack$}}
\newcommand{\bla}{\raise.2ex\hbox{$\scriptstyle\pmb \langle$}}
\newcommand{\sbla}{\raise.1ex\hbox{$\scriptscriptstyle\pmb \langle$}}
\newcommand{\bra}{\raise.2ex\hbox{$\scriptstyle\pmb \rangle$}}
\newcommand{\sbra}{\raise.1ex\hbox{$\scriptscriptstyle\pmb \rangle$}}
\newcommand{\blrb}{\raise.3ex\hbox{$\scriptstyle \pmb | $}}
\newcommand{\sblrb}{\raise.1ex\hbox{$\scriptscriptstyle \pmb | $}}
\newcommand{\R}{\mathbb R}
\newcommand{\psum}{{+_{\negthinspace\kern-2pt p}}\,}
\newcommand{\qsum}[1]{{+_{\negthinspace\kern-2pt #1}}\,}
\newcommand{\dpsum}{{\tilde+_{\negthinspace\kern-1pt p}}\,}
\newcommand{\dqsum}[1]{{\tilde+_{\negthinspace\kern-1pt #1}}\,}
\newcommand{\lsub}[1]{\hskip -1.5pt\lower.5ex\hbox{$_{#1}$}}
\numberwithin{equation}{section}
\newtheorem{theo}{Theorem}[section]
\newtheorem{rema}[theo]{Remark}
\theoremstyle{definition}
\begin{document}

\title[]{A class of overdetermined problem for fractional Capacity}

\author[L. Qin]{Lei Qin }
\address{School of Mathematics,
Hunan University, Changsha, 410082, China}
\email{qlhnumath@hnu.edu.cn}

\author[L. Zhang]{Lu Zhang}
\address{School of Mathematics,
Hunan University, Changsha, 410082, China}
\email{luzhang@hnu.edu.cn}

\keywords{fractional Capacity, concavity index, unconventional overdetermined problem}

\maketitle

\baselineskip18pt

\parskip3pt

\begin{abstract}
In this paper, we consider an unconventional overdetermined problem through a property of concavity, which provides some characterizations of balls via Brunn-Minkowski inequalities. In this setting, our rsults can be viewed as the generalization of $p$-capacity in \cite{S15}, which have its own interest.
\end{abstract}

\section{Introduction}

In the field of partial differential equations, the overdetermined boundary value problem usually consists in a Dirichlet problem given in an unknown domain, whose solution is required to satisfy some extra conditions (classically a Neumann boundary condition) which are used to determine the shape of domain itself. After Serrin's initially groundbreaking work in\cite{S71}, the overdetermined elliptic problems were extensively studied, and they have a long history. From overdetermied problem of Laplacian to $p$-Laplace, a natural problem is to consider counterpart Serrin's corner results in nonlocal framework (or called fractional setting), which was studied by Fall and Jarohs \cite{FJ15}. They studied the non-local overdetermined problem, which needs new technique, since non-locality of the fractional Laplacian makes it difficult to study. There are many results in the literature dealing with fractional Lapacian. For example, the overdetermined problems of fractional Lapacian with fractional normal derivative constant prescribed have been considered in \cite{SV19}. Very recently, Ciraolo and Pollastro \cite{CP23} studied the overdetermined problem for fractional capacity, and they obtained some useful results, which will be used to prove our work.

An interesting problem is whether the geometry of domain is uniquely determined by imposing concavity. The unconventional overdetermined problem is given in \cite{S15} by adding extra condition, i.e., the convexity of a certain power of solution. Moreover, some other conditions are also given, for example, the unconventional overdetermined problem is given by extra condition, i.e., the complement to its maximum, which characterizes ellipsoid in \cite{HNST18}. Different from Serrin's work, which was based on moving plane method. The work depends on the concavity of Brunn-Minkowski inequality and differentiability properties of the shape functional by Fragal\`a \cite{F12}. Besides the overdetermined problem in classical Euclidean norm, many researchers also considered the anisotropic case in \cite{BCS16,BCS18,CP09}.

In this paper, we tackle the unconventional overdetermined problem in fractional framework.

Let $\Omega$ be a compact set, the fractional capacity of order $1/2$ (or $1$-Reisz capacity) of $\Omega$ is defined by
\[
\mathrm{Cap}_{1/2} (\Omega)= \mathrm{\inf} \{ \|\varphi\|_{\mathring{H}^{\frac{1}{2}}(\R^n)}^{2}: \ \varphi \in C_{c}^{\infty}(\R^n), \ \varphi\geq 1  \ \mathrm{on} \ \Omega\},
\]
where $\|\varphi\|_{\mathring{H}^{\frac{1}{2}}(\R^n)}^{2}$ is the $Gagliardo\ seminorm$, which is defined by
\[
\|\varphi\|_{\mathring{H}^{\frac{1}{2}}(\R^n)}:= \left( \int_{\R^n} \int_{\R^n} \frac{|\varphi(x)-\varphi(y)|^2}{|x-y|^{n+1}}dxdy  \right)^{1/2}.
\]
In fact, there exists unique fractional capacitary potential function $u$ such that
\begin{align*}
 \mathrm{Cap}_{1/2} (\Omega)=\int_{\R^n} \int_{\R^n} \frac{|u(x)-u(y)|^2}{|x-y|^{n+1}}dxdy=2c(n,1/2)\int_{\R^n} u(-\Delta)^{\frac{1}{2}}u dx,
\end{align*}
where
\[
c(n,1/2)=\int_{\R^n} \frac{1-\cos(\zeta_1)}{|\zeta|^{n+1}} d\zeta.
\]
Moreover, the function $u$ satisfies
\begin{equation}\label{PDE1}
\left\{
\begin{aligned}
& (-\Delta)^{\frac{1}{2}} u=0, \ \ \ \ \ \  \mathrm {on} \ \ \ \R^n \setminus \Omega,  \\
& u=1,  \ \ \ \ \ \  \ \ \ \ \ \ \ \ \  \mathrm {on} \ \ \ \Omega, \\
& {\lim}_{|x|\rightarrow +\infty} u(x)=0,
\end{aligned}
\right.
\end{equation}
where the $\frac{1}{2}$-fractional Laplacian $(-\Delta)^{\frac{1}{2}}$ is defined by
\begin{equation}\label{F-S-1}
(-\Delta)^{\frac{1}{2}}u(x)=c(n,1/2)^{-1}P.V.\int_{\R^{n}}\frac{u(x)-u(y)}{|x-y|^{n+1}}dy.
\end{equation}
For general fractional capacity and fractional Sobolev space, see details in \cite{COR22,CP23,NPV12}.

In what follows, for the convenience of describing our main results, we give some notations. A positive function $\upsilon$ is $\alpha$-concave, for some $\alpha\in [-\infty,+\infty)$, if
\[
\left\{
\begin{aligned}
& \upsilon^{\alpha}\ \mathrm{is\ concave,\ for\ } \alpha>0; \\
& \log \upsilon\ \mathrm{is\ concave,\ for\ } \alpha=0; \\
& \upsilon^{\alpha}\ \mathrm{is\ convex,\ for\ } \alpha<0; \\
& \text{all the super level sets}\ \{x\in \R^n:\ \upsilon(x)\geq t\}\ \text{are convex, for } \alpha=-\infty.
\end{aligned}
\right.
\]
Specially, if $\alpha=0$, $\upsilon$ is also called log-concave. If $\alpha=1$, $\upsilon$ is the usual concave function. The concavity index of $u$ is defined as follows:
\[
\alpha(\upsilon)=\sup \{\beta\leq 1:\ \upsilon\ \mathrm{is}\ \beta\mathrm{-concave} \}.
\]
It is obvious that the above supremum is in fact a maximum. The power index is optimal in the sense that no longer power ensures concavity for every convex set.

When $\Omega$ is convex and its capacitary function $u$ is level set convex, we define the $1/2$-fractional capacitary concavity index of $\Omega$ as follows:
\[
\alpha(\Omega)= \alpha(u).
\]
Indeed, when $\Omega$ is a ball of radius $R>0$ centered at $x_0$, it is easy to find explicitly the solution of (\ref{PDE1}), that is,
\[
u(x)=R^{n-1}|x-x_0|^{1-n}.
\]
It is obvious that $\mathrm{Cap}_{1/2}(B_{R}(x_0))=R^{n-1}$ and $u^{1/(1-n)}$ is convex, that is, $u$ is $1/(1-n)$-concave.

In this paper, we prove that the property of $u^{1/(1-n)}$ to be convex characterizes ball.
\begin{theo}\label{T1}
Let $\Omega$ be a bounded convex domain in $\R^n$. Then
\[
\alpha(\Omega)\leq \frac{1}{1-n}
\]
and equality holds if and only if $\Omega$ is a ball.
\end{theo}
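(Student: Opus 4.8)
The plan is to run the Brunn--Minkowski plus shape‑differentiability scheme of \cite{S15,F12}, the one genuinely new ingredient being the analysis at infinity, where the Riesz kernel $|x|^{1-n}$ enters. Write $\beta_0:=\tfrac1{1-n}$ and let $u$ be the capacitary potential of $\Omega$. From \cite{CP23} I will use that $\mathrm{Cap}_{1/2}$ is monotone, $(n-1)$‑homogeneous under dilations, and obeys $\mathrm{Cap}_{1/2}((1-\lambda)K_0+\lambda K_1)^{1/(n-1)}\ge(1-\lambda)\mathrm{Cap}_{1/2}(K_0)^{1/(n-1)}+\lambda\,\mathrm{Cap}_{1/2}(K_1)^{1/(n-1)}$ on convex bodies, with equality for $\lambda\in(0,1)$ only if $K_0,K_1$ are homothetic; and I will use that, $\Omega$ being bounded, $u=G*\mu_\Omega$ with $G$ a positive multiple of $|x|^{1-n}$ and $\mu_\Omega=(-\Delta)^{1/2}u\ge0$ a finite measure carried by $\overline\Omega$, so that $u(x)=\mathrm{Cap}_{1/2}(\Omega)\,|x|^{1-n}(1+o(1))$ as $|x|\to\infty$, and in fact $u(x)=\mathrm{Cap}_{1/2}(\Omega)\,|x|^{1-n}+O(|x|^{-n-1})$ once the origin is put at the barycentre of $\mu_\Omega$ (killing the dipole term). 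For the bound $\alpha(\Omega)\le\beta_0$ I assume $u$ is level‑set convex (otherwise $\alpha(\Omega)=-\infty$) and that $u$ is $\beta$‑concave for some $\beta\in(\beta_0,1]$; since $\beta$‑concavity implies $\beta'$‑concavity for $\beta'\le\beta$, it is enough to rule out $\beta\in(\beta_0,0]$. For such $\beta<0$ (the case $\beta=0$ is identical with $\log u$), the superlevel sets $\Omega_s=\{u\ge s\}$, $s\in(0,1]$, are convex, bounded, and satisfy $(1-\lambda)\Omega_s+\lambda\Omega_t\subseteq\Omega_\sigma$ with $\sigma=((1-\lambda)s^\beta+\lambda t^\beta)^{1/\beta}\in(0,1]$; combining this with monotonicity and Brunn--Minkowski and setting $\mu(s)=\mathrm{Cap}_{1/2}(\Omega_s)$, the function $g(r):=\mu(r^{1/\beta})^{1/(n-1)}$ is concave on $[1,\infty)$. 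Since $\Omega_s$ is trapped between balls of radii $\asymp(\mathrm{Cap}_{1/2}(\Omega)/s)^{1/(n-1)}$ we get $g(r)\asymp r^{-1/(\beta(n-1))}$, and a concave function on $[1,\infty)$ grows at most linearly, forcing $-\tfrac1{\beta(n-1)}\le1$, i.e. $\beta\le\beta_0$ — a contradiction. (The same conclusion also drops out of restricting the convex function $u^\beta$ to a half‑line meeting $\Omega$.)

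Now suppose $\alpha(\Omega)=\beta_0$, so $w:=u^{\beta_0}$ is convex; by the above it suffices to show $\Omega$ is a ball, the converse being the computation already recorded in the introduction. Rerun the previous construction with $\beta=\beta_0$: here $r=s^{1-n}$ and $g(r)=\mu(r^{1-n})^{1/(n-1)}$ is concave on $[1,\infty)$ with $g(1)=\mathrm{Cap}_{1/2}(\Omega)^{1/(n-1)}=:a$. Using the dipole‑free expansion, $\Omega_s$ is a near‑ball of radius $\rho_s=(\mathrm{Cap}_{1/2}(\Omega)/s)^{1/(n-1)}$ with Hausdorff deviation $O(\rho_s^{-1})$, hence $\mu(s)=\tfrac{\mathrm{Cap}_{1/2}(\Omega)}{s}\bigl(1+O(s^{2/(n-1)})\bigr)$ and $g(r)=ar+O(r^{-1})$. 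Therefore $h(r):=g(r)-ar$ is concave on $[1,\infty)$, nonnegative (concavity together with $g(1)=a$ and $g(r)/r\to a$ force $g(r)\ge ar$), vanishes at $1$, and tends to $0$; the elementary fact that a concave function on $[1,\infty)$ which is $\ge0$, vanishes at $1$, and tends to $0$ must be identically $0$ then gives $g(r)=ar$. Consequently equality holds throughout the chain of the first step for all admissible $s,t,\lambda$: equality in monotonicity gives $(1-\lambda)\Omega_s+\lambda\Omega_t=\Omega_\sigma$, and then equality in Brunn--Minkowski (for $\lambda\in(0,1)$) gives that $\Omega_s$ and $\Omega_t$ are homothetic for all $s,t\in(0,1]$.

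Finally, all superlevel sets $\Omega_s$ are homothetic to $\Omega=\Omega_1$, whereas the leading‑order asymptotics $u(x)=\mathrm{Cap}_{1/2}(\Omega)|x|^{1-n}(1+o(1))$ show that $\tfrac1{\rho_s}\Omega_s\to\overline B_1$ in the Hausdorff metric as $s\to0$. A fixed convex body whose dilates converge to a ball must itself be a ball, so $\Omega$ is a ball, which finishes the proof. The main obstacle is precisely this rigidity step, and it is genuinely nonlocal: the nonlocality of $(-\Delta)^{1/2}$ destroys the identity ``$\min\{u/s,1\}$ is the capacitary potential of $\Omega_s$'' available for the local $p$‑capacity, so $\mathrm{Cap}_{1/2}(\Omega_s)$ cannot be read off from $s$ directly. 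One must therefore (i) establish a sufficiently precise, dipole‑free multipole expansion of the fractional‑harmonic function $u$ at infinity — accurate enough that the concave remainder $h$ truly tends to $0$, not merely $o(r)$ — together with the matching quantitative capacity estimates for near‑balls, and (ii) handle carefully the equality case of the Brunn--Minkowski inequality of \cite{CP23} and the compactness fact that a fixed convex body flattening onto a ball is a ball. Step (i) is where the real work lies.
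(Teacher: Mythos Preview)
Your argument is correct, but it is substantially more elaborate than the paper's, and the divergence stems from exactly the point you flag as ``the main obstacle''. The paper's proof runs as follows: assume $u^{1/(1-n)}$ is convex; for $r,s\in(0,1]$ and $\lambda\in(0,1)$ set $t^{1/(1-n)}=(1-\lambda)r^{1/(1-n)}+\lambda s^{1/(1-n)}$, so convexity gives $\Omega(t)\supseteq(1-\lambda)\Omega(r)+\lambda\Omega(s)$; then monotonicity together with the Brunn--Minkowski inequality (Theorem~\ref{BMI}) yields
\[
\mathrm{Cap}_{1/2}(\Omega(t))^{1/(n-1)}\ge(1-\lambda)\mathrm{Cap}_{1/2}(\Omega(r))^{1/(n-1)}+\lambda\,\mathrm{Cap}_{1/2}(\Omega(s))^{1/(n-1)}.
\]
At this point the paper invokes the identity $\mathrm{Cap}_{1/2}(\Omega(\tau))=\tau^{-1}\mathrm{Cap}_{1/2}(\Omega)$ for every $\tau\in(0,1]$, which it records in Section~\ref{Se2} (from the claim that $u_\tau=\tau^{-1}u$ is the potential of $\Omega(\tau)$). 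Plugging this in, both sides equal $\mathrm{Cap}_{1/2}(\Omega)^{1/(n-1)}\cdot t^{1/(1-n)}$ by the very definition of $t$, so equality holds in Brunn--Minkowski, $\Omega(r)$ and $\Omega(s)$ are homothetic, and Theorem~\ref{T2} finishes. The inequality $\alpha(\Omega)\le\tfrac1{1-n}$ is then obtained a posteriori from the ball computation.

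So the paper does \emph{not} need your multipole expansion, the near-ball capacity estimates, or the concave-remainder argument $h\equiv0$: the single identity $\mathrm{Cap}_{1/2}(\Omega(\tau))=\tau^{-1}\mathrm{Cap}_{1/2}(\Omega)$ forces equality in one stroke, and Theorem~\ref{T2} (rather than your Hausdorff-limit argument) delivers the ball. Your suspicion that this identity is delicate in the nonlocal setting is well placed --- the function $\min\{u/\tau,1\}$ is indeed \emph{not} $(-\Delta)^{1/2}$-harmonic outside $\Omega(\tau)$, so the paper's one-line justification ``$u_\tau=\tau^{-1}u$, which is easily checked'' deserves scrutiny --- but the paper simply asserts and uses it. What your route buys is independence from that identity, at the cost of the asymptotic machinery in step~(i); what the paper's route buys is a two-line reduction to the equality case of Brunn--Minkowski, provided one accepts the level-set capacity relation.
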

To establish Theorem \ref{T1}, we shall need three basic facts:

$\bullet$ The Brunn-Minkowski inequality for fractional capacity $\mathrm{Cap}_{1/2}$ and its equality condition;

$\bullet$ $\mathrm{Cap}_{1/2}$ can be expressed by the behavior at infinity of the potential function $u$;

$\bullet$ The relationship between the fractional capacity $\mathrm{Cap}_{1/2}$ of level set of $u$ and $\Omega$.

We emphasize that these symbols are slightly different, such as the fractional capacity $\mathrm{Cap}_{1/2}$ is also called $1$-Riesz capacity in \cite{NR2015,MNR2018,MNR2022}. But it is essentially the same, and we can understand these properties in view of different directions.
\begin{rema}
\rm{ The Brunn-Minkowski inequality for fractional capacity $\mathrm{Cap}_{1/2}$ was established in  \cite[Theorem 1.1]{NR2015}. The equality case was obtained  for $n=2$ in \cite[Lemma 3.7]{MNR2018}. In fact, The equality case is also holds for any dimension. In Section \ref{Se4}, we give the equality case for $n\geq 3$ for the content integrity, and the approach is similar to the case $n=2$. }
\end{rema}

\begin{theo}\label{T2}
If the solution $u$ of $(\ref{PDE1})$ has two homothetic convex level sets, then $\Omega$ is a ball.
\end{theo}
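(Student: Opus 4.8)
The plan is to show that the presence of two homothetic level sets forces $u$ to be radially symmetric near infinity, and then to push that symmetry all the way down to $\partial\Omega$ by analytic continuation. Write $\Omega_t=\{u\ge t\}$ for $t\in(0,1]$ (so $\Omega_1=\Omega$), and let $\Omega_{t_1},\Omega_{t_2}$ be the two homothetic convex level sets with $t_2<t_1$; then $\Omega_{t_2}\supsetneq\Omega_{t_1}$, and the homothety taking $\Omega_{t_1}$ to $\Omega_{t_2}$ is necessarily an expansion, say $\Omega_{t_2}=A(\Omega_{t_1})$ with $A(x)=p+\lambda(x-p)$, $\lambda>1$, and $p$ its fixed point. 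Since the level sets of $u$ are strictly nested, $\overline{\Omega_{t_1}}\subset\operatorname{int}\Omega_{t_2}=\operatorname{int}A(\Omega_{t_1})$, whence $p\in\operatorname{int}\Omega_{t_1}$ after applying $A^{-1}$; and because a convex body containing $p$ is star-shaped with respect to $p$, $y\notin\Omega_{t_1}$ implies $A(y)\notin\Omega_{t_1}$, so $A$ maps $\R^n\setminus\Omega_{t_1}$ into itself.

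The heart of the matter is the self-similarity identity
\[
u(y)=\frac{t_1}{t_2}\,u\bigl(A(y)\bigr)\qquad\text{for all }y\in\R^n\setminus\Omega_{t_1},
\]
which I would extract from $\Omega_{t_2}=A(\Omega_{t_1})$ together with the auxiliary facts collected after Theorem \ref{T1} — the relation between $\mathrm{Cap}_{1/2}$ of a level set of $u$ and $\mathrm{Cap}_{1/2}(\Omega)$, the expansion $u(x)=\mathrm{Cap}_{1/2}(\Omega)\,|x|^{1-n}+o(|x|^{1-n})$ at infinity, and the equality case of the Brunn--Minkowski inequality for $\mathrm{Cap}_{1/2}$ — along with the elementary homogeneity (degree $n-1$ under dilations) and translation invariance of $\mathrm{Cap}_{1/2}$ and the uniqueness of capacitary potentials. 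Granting the identity, letting $x\to\infty$ in it and using the decay of $u$ gives $\lambda^{\,n-1}=t_1/t_2$; iterating it $k$ times (legitimate since $A$ preserves $\R^n\setminus\Omega_{t_1}$) yields $u(y)=\lambda^{\,k(n-1)}u(A^k y)$ with $A^k y=p+\lambda^{k}(y-p)$, and since $|A^k y|=\lambda^{k}|y-p|\,(1+o(1))$ as $k\to\infty$, plugging in the decay of $u$ and letting $k\to\infty$ produces
\[
u(y)=\mathrm{Cap}_{1/2}(\Omega)\,|y-p|^{1-n}\qquad\text{for all }y\in\R^n\setminus\Omega_{t_1}.
\]
Thus $u$ is radially symmetric about $p$ on the exterior of $\Omega_{t_1}$.

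To conclude, recall that solutions of $(-\Delta)^{1/2}u=0$ are real-analytic in their domain of harmonicity, so $u$ is real-analytic on the connected open set $\R^n\setminus\Omega$; it coincides with the real-analytic function $x\mapsto\mathrm{Cap}_{1/2}(\Omega)\,|x-p|^{1-n}$ on the nonempty open subset $\R^n\setminus\Omega_{t_1}$, hence, by the identity theorem, on all of $\R^n\setminus(\Omega\cup\{p\})$. As $u$ is bounded while $|x-p|^{1-n}\to\infty$ at $p$, this is possible only if $p\in\Omega$, and then $u(x)=\mathrm{Cap}_{1/2}(\Omega)\,|x-p|^{1-n}$ on all of $\R^n\setminus\Omega$; therefore $\partial\Omega=\{u=1\}$ is the sphere of radius $\mathrm{Cap}_{1/2}(\Omega)^{1/(n-1)}$ about $p$, and $\Omega$ is the corresponding ball. (When $t_1=1$, i.e.\ $\Omega$ itself is one of the two homothetic level sets, the analytic continuation step is vacuous and the conclusion is reached already at the second display.)

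The step I expect to be the genuine obstacle is the derivation of the self-similarity identity. In the classical local setting it is immediate, because $\min(u/t,1)$ is precisely the capacitary potential of $\Omega_t$; uniqueness of capacitary potentials together with $\Omega_{t_2}=A(\Omega_{t_1})$ then gives $\min(u/t_1,1)=\min(u/t_2,1)\circ A^{-1}$, hence the identity on $\R^n\setminus\Omega_{t_2}$, which one upgrades to $\R^n\setminus\Omega_{t_1}$ by analyticity. In the nonlocal case this shortcut fails: one only has $(-\Delta)^{1/2}\min(u/t,1)\ge0$ outside $\Omega_t$ (with strict inequality, the defect coming from $\{u>t\}$), so $\min(u/t,1)$ is a mere supersolution and the restriction of $u$ to $\R^n\setminus\Omega_{t_1}$ is not determined by $\Omega_{t_1}$ alone. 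The identity must instead be recovered from the nonlocal toolbox — the behaviour at infinity of the capacitary potentials of $\Omega_{t_1}$ and $\Omega_{t_2}$, the equality case of the Brunn--Minkowski inequality of Section \ref{Se4}, and the results of Ciraolo--Pollastro \cite{CP23} — or, alternatively, by passing to the Caffarelli--Silvestre extension, where the trace problem for $u$ becomes the Newtonian capacitary problem in $\R^{n+1}$ for $\Omega\times\{0\}$ and the classical argument applies, at the cost of first promoting the homothety of $\Omega_{t_1},\Omega_{t_2}$ in $\R^n$ to that of the corresponding superlevel sets of the extension in $\R^{n+1}$.
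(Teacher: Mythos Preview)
Your overall plan---derive a self-similarity identity for $u$, iterate it, pass to the limit using the decay $u(x)\sim\mathrm{Cap}_{1/2}(\Omega)\,|x|^{1-n}$ to force radial symmetry on an outer region, then push inward to $\partial\Omega$---is exactly the paper's strategy. The two divergences are these. For the endgame, the paper does not use real analyticity: having shown (via the iteration) that a level set $\Omega(s)$ is a ball and hence that $u$ is radial in $\R^n\setminus\Omega(s)$, it invokes \cite[Theorem~1.1]{CP23} to propagate radiality to all of $\R^n\setminus\Omega$. For the self-similarity identity---the step you flag as ``the genuine obstacle''---the paper does none of the workarounds you propose. It simply records in Section~\ref{Se2}, as a preliminary fact ``easily to be checked,'' that the capacitary potential of $\Omega(t)$ equals $t^{-1}u$ on $\R^n\setminus\Omega(t)$; combining this with the scaling relation $u_r(x)=u_s\bigl((x-\xi)/\rho\bigr)$ (which \emph{is} immediate from dilation/translation invariance once $\Omega(r)=\rho\,\Omega(s)+\xi$) gives $u(x)=(r/s)\,u\bigl((x-\xi)/\rho\bigr)$ directly. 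Your skepticism that $\min(u/t,1)$ solves the nonlocal capacitary problem for $\Omega(t)$ is well founded---the truncation is only a supersolution, for the reason you state---so this is arguably a soft spot in the paper's own argument rather than a defect of your proposal; but within the paper's framework the identity is asserted and used, not recovered from the Brunn--Minkowski equality case, from \cite{CP23}, or from the Caffarelli--Silvestre extension.
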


In Theorem \ref{T1}, the overdetermination is given by the concavity property of solution $u$ of (\ref{PDE1}). While in Theorem \ref{T2}, the overdetermination is given by the existence of two homothetic level sets.

The organization of the paper is as follows. In Section \ref{Se2}, we introduce some basic facts and notions, which are useful for us to prove our main theorems. In Section \ref{Se3}, we prove the Theorem \ref{T2}. In Section \ref{Se4}, we prove  Theorem \ref{T1}.

\section{Preliminaries}\label{Se2}

In this section, we will concentrate on the basic facts about fractional capacity. For the current work, we also list some results, which comes from references.

Let $\mathbb{R}^n$ be the $n$-dimensional Euclidean space, and $\omega_n$ is the volume of unit ball. For $x\in \R^n$ and $R>0$, we denote by $B_{R}(x_0)$ the open ball of radius centered in $x$. In finite dimensional Euclidean space $\R^n$, a set $\Omega\subseteq \R^n$ is called a compact set if it is a closed, bounded set. We say that a set $\Omega\subseteq \R^n$ is a convex body if it is a compact convex subset of $\R^n$ with non-empty interior. Let $\mathcal{K}^n$ be the class of convex bodies in $\mathbb{R}^n$. Let $C_{c}^{\infty}(\R^n)$ be the set of functions from $C^{\infty}(\R^n)$ having compact support.

Let $\Omega_1, \Omega_2 \in \mathcal{K}^n $, then for $\lambda \in [0,1]$, the convex linear combination of $\Omega_1$ and $\Omega_2$ is defined by
\[
\lambda \Omega_1 +(1-\lambda)\Omega_2=\{\lambda x+(1-\lambda)y:\ x\in \Omega_1,\ y\in \Omega_2 \}.
\]

It is well known that if $\Omega$ is convex body, then solution $u$ of $(\ref{PDE1})$ is continuous and level set convex, that is,
\[
\Omega(t)=\{ x\in \R^n:\ u(x)\geq t \},\ \ \ t\in \R
\]
are convex (see \cite{NR2015}). Moreover, the potential function $u_t$ of $\Omega(t)$ is given by $u_t(x)=t^{-1}u(x)$, which is easily to be checked, it also holds that
\[
\mathrm{Cap}_{1/2}(\Omega(t))=t^{-1} \mathrm{Cap}_{1/2}(\Omega).
\]
Moreover, $\mathrm{Cap}_{1/2} (\Omega)$ can be characterized through he behaviour at infinity of the fractional capacity potential function $u$ of a convex body, that is,
\begin{equation}\label{B5}
\mathrm{Cap}_{1/2}(\Omega)= \lim\limits_{|x|\rightarrow \infty}  u(x) |x|^{n-1},
\end{equation}
(see Page 7 in \cite{NR2015}).

Given a compact set $\Omega\subseteq \R^n$, $n\geq 3$, the $p$-capacity is defined by
\[
\mathrm{Cap}_{p} (\Omega)= \min \left\{ \int_{\R^N} |\nabla \varphi|^p dx: \ \varphi \in C_{c}^{1}(\R^n,[0,1]), \ \varphi \geq \chi_\Omega   \right\}.
\]
when $p=2$, it is the Newtonian capacity. In fact, there exists unique capacitary function $u$ such that
\[
\mathrm{Cap}_{2} (\Omega)=\int_{\R^n \setminus \Omega} |\nabla u|^2 dx,
\]
and it satisfies the Euler-Lagrange equation:
\begin{equation}\nonumber
\left\{
\begin{aligned}
& \Delta u=0, \ \ \ \ \ \  \mathrm {on} \ \ \ \R^N \setminus \Omega,  \\
& u=1, \ \ \ \ \ \ \ \ \ \mathrm {on} \ \ \ \partial \Omega,\\
&{\lim}_{|x|\rightarrow +\infty} u(x)=0.
\end{aligned}
\right.
\end{equation}
It is well known that the following relation between the Newton capacity of a convex domain and the behavior at infinity of the Newtonian potential hold:
\begin{equation}\label{B6}
\mathrm{Cap}_{2} (\Omega)=(n-2)\omega_n \lim\limits_{|x|\rightarrow \infty} u(x)|x|^{n-2}.
\end{equation}
We stress that these results is also hold for $p$-capacity, for more details, refer to \cite{CS03}.

The relationship between the Newtonian capacity and $1/2$-fractional capacity is
\[
\text{Cap}_{1/2}(\Omega)=2\text{Cap}_{2}(\Omega \times\{0\}),
\]
where $\Omega \subseteq \R^2$ is a compact set, (cf.\cite[the formula (3.15)]{MNR2022}). We remark this conclusion is also hold for any dimension when $\Omega$ is convex domain, and we give its proof in Section \ref{Se3}, see formula (\ref{B4}).

\section{Proof of Theorem \ref{T2}}\label{Se3}

\begin{proof}[\bf Proof of Theorem \ref{T2}.]

By the assumption, let $0<r<s\leq 1$, $\Omega(r)$ and $\Omega(s)$ are the homothetic superlevel sets, that is, there exist $\rho>0$ and $\xi \in \R^n$ such that
\begin{equation}\label{4-1}
\Omega(r)=\rho \Omega(s)+\xi.
\end{equation}
Since $r<s$, it holds that
\[
\Omega(s)\subseteq \Omega(r),
\]
we have $\rho>1$. For $t\in (0,1]$, let us denote by $u_t$ the potential function of $\Omega(t)$, i.e., the solution of
\begin{equation}\label{PDE2}
\left\{
\begin{aligned}
& (-\Delta)^{\frac{1}{2}} u_t=0, \ \ \ \ \ \  \mathrm {on} \ \ \ \R^N \setminus \Omega(t),  \\
& \ u_t=1,  \ \ \ \ \ \  \ \ \ \ \ \ \ \  \mathrm {on} \ \ \ \partial \Omega(t), \\
& {\lim}_{|x|\rightarrow +\infty} u_t(x)=0.
\end{aligned}
\right.
\end{equation}
Then we have
\[
u_r(x)=\frac{u(x)}{r}, \ \ \ x\in \R^n \setminus \Omega(r)\ \ \ \mathrm{and}\ \ u_s(x)=\frac{u(x)}{s}, \ \ \ x\in \R^n \setminus \Omega(s).
\]
On the other hand, by (\ref{4-1}), we have
\[
u_r(x)=u_s \left(\frac{x-\xi}{\rho} \right).
\]
Thus, we deduce
\begin{equation}\label{4-2}
u(x)=\frac{r}{s} u \left(\frac{x-\xi}{\rho} \right), \ \ \ \ x\in \R^n\setminus \Omega(r).
\end{equation}
By the definition of $\mathrm{Cap}_{1/2}(\Omega)$, (\ref{B5}) and (\ref{4-2}), we have
\begin{align*}
\mathrm{Cap}_{1/2}(\Omega)&= \lim\limits_{|x|\rightarrow \infty}  u(x) |x|^{n-1}=\frac{r}{s} \lim\limits_{|x|\rightarrow \infty} u \left(\frac{x-\xi}{\rho} \right) |x|^{n-1}\\
&=\frac{r}{s} \rho^{n-1} \lim\limits_{|x|\rightarrow \infty} u \left(\frac{x-\xi}{\rho} \right) \left(\frac{|x-\xi|}{\rho} \right)^{n-1}  \left(\frac{|x|}{|x-\xi|} \right)^{n-1}\\
&=\frac{r}{s} \rho^{n-1} \mathrm{Cap}_{1/2}(\Omega).
\end{align*}
Then, we conclude that
\[
\frac{r}{s} \rho^{n-1}=1.
\]
Since $\rho>1$, we have $\frac{r}{s}<1$. This and (\ref{4-2}) imply that
\[
\Omega(t)=\rho \Omega(\frac{s}{r}t)+\xi, \ \ \ \ \text{for}\ \ t<r.
\]
Let
\[
s_0=s,\ \ s_1=r,\ \ s_k=(\frac{r}{s})^k s=\rho^{k(1-n)}s,\ \ k=2,3,\cdots,
\]
it is easily to seen that
\[
\mathop{\lim}_{k\rightarrow \infty} s_k=0.
\]
Moreover,
\begin{equation}\label{4-3}
\begin{aligned}
\Omega(s_k)&=\rho \Omega(s_{k-1})+\xi=\rho^2 \Omega(s_{k-2})+\rho \xi +\xi=\cdots\\
&=\rho^k \Omega(s_0)+\xi \mathop \sum_{i=0}^{k-1} \rho^i=\rho^k \Omega(s)+\xi \frac{\rho^k-1}{\rho-1}.
\end{aligned}
\end{equation}
If $x,y\in \partial \Omega(s)$, that is,
\[
u(x)=u(y)=s,
\]
and let
\[
x_k=\rho^k x+ \xi \frac{\rho^k-1}{\rho-1},
\]
\[
y_k=\rho^k y+ \xi \frac{\rho^k-1}{\rho-1}.
\]
Since $\rho>1$, we have
\[
\mathop {\lim}_{k\rightarrow \infty} |x_k|=\mathop {\lim}_{k\rightarrow \infty} |y_k|=\infty.
\]
Then, by (\ref{B5}), we get
\begin{equation}\label{4-4}
\mathop {\lim}_{k\rightarrow \infty}  u(x_k) |x_k|^{n-1}=\mathrm{Cap}_{1/2}(\Omega)=\mathop {\lim}_{k\rightarrow \infty}  u(y_k) |y_k|^{n-1}.
\end{equation}
By (\ref{4-3}), we have
\[
u(x_k)=u(y_k)=s_k.
\]
This and (\ref{4-4}) imply that
\[
\mathop {\lim}_{k\rightarrow \infty}  s_k |x_k|^{n-1}=\mathop {\lim}_{k\rightarrow \infty} s_k |y_k|^{n-1},
\]
that is,
\[
\mathop {\lim}_{k\rightarrow \infty} \rho^{k(1-n)}s \left| \rho^k x+ \xi \frac{\rho^k-1}{\rho-1} \right|^{n-1}=\mathop {\lim}_{k\rightarrow \infty} \rho^{k(1-n)}s \left| \rho^k y+ \xi \frac{\rho^k-1}{\rho-1} \right|^{n-1},
\]
this implies that
\[
\mathop {\lim}_{k\rightarrow \infty} \left|  x+ \xi \frac{1-\rho^{-k}}{\rho-1} \right|^{n-1}=\mathop {\lim}_{k\rightarrow \infty} \left|y+ \xi \frac{1-\rho^{-k}}{\rho-1} \right|^{n-1}.
\]
Since $\rho>1$, we can deduce that
\[
 \left|  x+ \xi \frac{1}{\rho-1} \right|= \left|y+ \xi \frac{1}{\rho-1} \right|=:R,
\]
which implies that $\Omega(s)$ is a ball with radius centered at the point $\xi/(1-\rho)$, that is,
\[
\Omega(s)=B(\frac{\xi}{1-\rho}, R),
\]
this and (\ref{4-3}) imply that
\[
\Omega(s_k)=B(\frac{\xi}{1-\rho}, R\rho^k).
\]
Then $u$ is radial in $\R^n \setminus \overline{\Omega(s)}$. By \cite[Theorem 1.1]{CP23}, we can deduce that $u$ is radial in $\R^n \setminus \overline{\Omega}$. Thus, $\Omega$ is a ball.
\end{proof}

\section{Proof of Theorem \ref{T1}}\label{Se4}

In this section, we chacracterize balls as unique domain under prescribed condition, i.e., we give the proof of Theorem \ref{T1}.

\begin{theo}\label{BMI}
Let $\Omega_1$ and $\Omega_2$ be convex bodies in $\R^n$ and for any $\lambda \in [0,1]$, it holds
\begin{equation}\label{B1}
\mathrm{Cap}_{1/2}(\lambda\Omega_1+(1-\lambda) \Omega_2)^{1/(n-1)} \geq\lambda \mathrm{Cap}_{1/2}( \Omega_1)^{1/(n-1)}+(1-\lambda)\mathrm{Cap}_{1/2}(\Omega_2)^{1/(n-1)}.
\end{equation}
Moreover, equality holds if and only if $K_{1}$ and $K_{2}$ are homothetic.
\end{theo}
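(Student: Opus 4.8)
The plan is to establish the Brunn--Minkowski inequality \eqref{B1} by reducing the fractional capacity $\mathrm{Cap}_{1/2}$ to the classical Newtonian capacity in one extra dimension, and then invoke the known Brunn--Minkowski theory for $\mathrm{Cap}_{2}$. Concretely, I would first prove the identity
\begin{equation}\label{B4}
\mathrm{Cap}_{1/2}(\Omega)=2\,\mathrm{Cap}_{2}(\Omega\times\{0\})
\end{equation}
for every convex body $\Omega\subseteq\R^n$, viewing $\Omega\times\{0\}$ as a compact (degenerate) convex set in $\R^{n+1}$. This is the content of the remark following \eqref{B6}, and the natural route is the Caffarelli--Silvestre extension: the fractional capacitary potential $u$ of $\Omega$ extends to a function $U$ on $\R^{n+1}_+$ that is harmonic there, equals $u$ on $\R^n\times\{0\}$, and whose Dirichlet energy $\int_{\R^{n+1}_+}|\nabla U|^2$ equals (up to the explicit constant) the Gagliardo seminorm of $u$; reflecting evenly across $\{x_{n+1}=0\}$ turns $U$ into the Newtonian capacitary potential of $\Omega\times\{0\}$ in $\R^{n+1}$, which accounts for the factor $2$. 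One must check that the extension of the \emph{capacitary} potential is indeed the \emph{energy-minimizing} extension, i.e. that the infima defining the two capacities are attained by the same function; this follows from uniqueness of the respective Euler--Lagrange problems.

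Next, under the Minkowski sum $\lambda\Omega_1+(1-\lambda)\Omega_2$ in $\R^n$, one has the compatibility
\[
\big(\lambda\Omega_1+(1-\lambda)\Omega_2\big)\times\{0\}
=\lambda\,(\Omega_1\times\{0\})+(1-\lambda)\,(\Omega_2\times\{0\})
\]
as subsets of $\R^{n+1}$, since the sum in the $(n{+}1)$st coordinate is just $\lambda\cdot 0+(1-\lambda)\cdot 0=0$. Hence \eqref{B4} converts \eqref{B1} into the statement that $\mathrm{Cap}_{2}^{1/((n+1)-2)}=\mathrm{Cap}_{2}^{1/(n-1)}$ is concave along Minkowski combinations of the (lower-dimensional) convex bodies $\Omega_i\times\{0\}$ in $\R^{n+1}$ — precisely the Borell--Caffarelli--Friedman Brunn--Minkowski inequality for Newtonian capacity in dimension $n+1$, whose exponent $1/(N-2)$ with $N=n+1$ matches. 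The only subtlety is that $\Omega_i\times\{0\}$ has empty interior in $\R^{n+1}$; one handles this by approximation, thickening to $\Omega_i\times[-\varepsilon,\varepsilon]$, applying the classical inequality, and letting $\varepsilon\to 0$ using continuity of $\mathrm{Cap}_{2}$ under Hausdorff convergence of convex bodies.

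For the equality case I would argue on two fronts. The soft direction — that homothetic $\Omega_1,\Omega_2$ force equality — is immediate from the scaling and translation invariance $\mathrm{Cap}_{1/2}(\rho\Omega+\xi)=\rho^{n-1}\mathrm{Cap}_{1/2}(\Omega)$, which itself follows from \eqref{B5} and the behaviour of $u$ under dilations (exactly the computation already performed in the proof of Theorem~\ref{T2}). For the rigidity direction I would mimic \cite[Lemma 3.7]{MNR2018}, which treats $n=2$: suppose equality holds in \eqref{B1} for some $\lambda\in(0,1)$. Using \eqref{B5}, write the capacity as a limit of $u(x)|x|^{n-1}$; the potential of the Minkowski combination is, by the maximum principle / sliding comparison, bounded below by the ``$\lambda$-combination'' of the potentials of $\Omega_1$ and $\Omega_2$ in the sense of infimal convolution of their level sets, and equality in the capacity forces equality in this comparison, hence the level sets $\Omega_i(t)$ of the two potentials must be dilates of one another with a ratio independent of $t$. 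Since the $t=1$ level set is $\Omega_i$ itself, this yields that $\Omega_1$ and $\Omega_2$ are homothetic. The main obstacle is making this last rigidity step rigorous in general dimension: one needs the strict version of the layer-by-layer Brunn--Minkowski estimate for the capacitary potentials together with a characterization of when the potential of a Minkowski sum equals the natural lower bound — this is where the work of Section~\ref{Se4} genuinely lies, and where I would follow the $n=2$ template of \cite{MNR2018} while replacing planar-specific arguments by the extension identity \eqref{B4} and the known equality case of the Newtonian Brunn--Minkowski inequality due to Caffarelli--Jerison--Lieb.
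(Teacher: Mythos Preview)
Your overall strategy coincides with the paper's: reduce $\mathrm{Cap}_{1/2}$ on $\R^n$ to Newtonian capacity $\mathrm{Cap}_2$ on $\R^{n+1}$ via the harmonic extension, and then invoke the known Brunn--Minkowski theory for $\mathrm{Cap}_2$ (Colesanti--Salani \cite{CS03}, which contains the Caffarelli--Jerison--Lieb equality case). The identity \eqref{B4} and the compatibility of the Minkowski sum with $\times\{0\}$ are used exactly as you describe, and for the bare inequality your $\varepsilon$-thickening argument is fine.

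The gap is in the rigidity step, and it is precisely the one you flag but do not resolve. Approximating $\Omega_i\times\{0\}$ by $\Omega_i\times[-\varepsilon,\varepsilon]$ and passing to the limit recovers the inequality but \emph{loses} the equality case: equality at $\varepsilon=0$ does not give equality for any positive $\varepsilon$, so you never land in a situation where \cite{CS03} applies. The paper's device avoids the limit altogether. Instead of thickening, it replaces the degenerate sets $\Omega_i\times\{0\}$ by the superlevel sets
\[
\Omega_i^r:=\{(x,t)\in\R^{n+1}:\ U_i(x,t)\ge r\},\qquad r\in(0,1),
\]
of the extended potentials $U_i$. These are genuine convex bodies in $\R^{n+1}$ (the level-set convexity of $U_i$ is part of \cite{NR2015}), and because $U_i/r$ is the Newtonian potential of $\Omega_i^r$ one has the \emph{exact} relation $\mathrm{Cap}_2(\Omega_i^r)=r^{-1}\mathrm{Cap}_2(\Omega_i\times\{0\})$. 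Combined with \eqref{B4} and the inclusion $\Omega_\lambda^r\supseteq\lambda\Omega_1^r+(1-\lambda)\Omega_2^r$ (which is the core of the proof of the inequality in \cite{NR2015}), equality in \eqref{B1} transfers, for every fixed $r<1$, to equality in the Newtonian Brunn--Minkowski inequality for the full-dimensional bodies $\Omega_1^r,\Omega_2^r$. Now \cite[Theorem~1]{CS03} applies directly and gives that $\Omega_1^r$ and $\Omega_2^r$ are homothetic; letting $r\to1$ yields the homothety of $\Omega_1$ and $\Omega_2$. This superlevel-set substitution is the missing idea in your proposal.
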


\begin{proof}[\bf Proof.] By the result in \cite{NR2015}, we only to characterize the equality case. Let $\Omega_1$ and $\Omega_2$ be convex bodies in $\R^n$ such that equality holds in the Brunn-Minkowski inequation for some $\lambda\in (0,1)$, and set $\Omega_\lambda=\lambda \Omega_1+(1-\lambda) \Omega_2$. Let $u_{\Omega_1},u_{\Omega_2},u_{\Omega_\lambda}$ be the fractional capacitary potential functions of $\Omega_1,\Omega_2,\Omega_\lambda$, and let $U_1,U_2,U_\lambda$ be the harmonic extensions of  $u_{\Omega_1},u_{\Omega_2},u_{\Omega_\lambda}$ respectively, that is, $U_i$ ($i=1,2,\lambda$) are the solution of
\begin{equation}\nonumber
\left\{
\begin{aligned}
& (-\Delta)_{(x,t)} U_{i}=0, \ \ \ \ \ \  \mathrm {in} \ \ \ \R^{n+1}  \\
& U_i(x,0)=1,  \ \ \ \ \ \  \ \ \ \ \ \ x\in \Omega_i, \\
& {\lim}_{|(x,t)|\rightarrow +\infty} U_i(x,t)=0.
\end{aligned}
\right.
\end{equation}
According to the results in \cite{NR2015}, it is well-known that $U_i$ ($i=1,2,\lambda$) are continuous, capacitary functions of convex set $\Omega_i\times \{0\} \subseteq \R^{n+1}$ respectively if we omit a constant factor, which have positive capacity. Moreover,
\begin{equation}\label{B3}
U_i(x,0)=u_{\Omega_i}(x), \ \ \ \ \ i=1,2,\lambda,
\end{equation}
and $U_i$ is level set convex.  Given $r\in (0,1)$, $\Omega^r_i:=\{(x,t)\in \R^{n+1}:\ U_i\geq r\}$ ($i=1,2,\lambda$) is a convex set,
Since $U_i$ is continuous and $U_i(x,0)=1$, $x\in \Omega_i$, we can deduce that $\Omega^r_i$ has a postive volume. Thus,
\begin{equation}\label{B2}
\text{Cap}_{2}(\Omega^r_i)=\frac{1}{r}\text{Cap}_{2}(\Omega_i \times \{0 \}).\ \ \ \ \ \ \ i  =1,2, \lambda.
\end{equation}
By formula (\ref{B5}) and (\ref{B6}), we know that for $i=1,2,\lambda$, it holds
\begin{equation*}
\text{Cap}_{2}(\Omega_i\times\{0\})=\lim\limits_{|x|\rightarrow \infty}|x|^{n-1}U_i(x,0),
\end{equation*}
and
\begin{equation*}
\text{Cap}_{1/2}(\Omega_i)=\lim\limits_{|x|\rightarrow \infty}|x|^{n-1}u_{\Omega_i}(x).
\end{equation*}
From (\ref{B3}), we have
\begin{equation}\label{B4}
\text{Cap}_{1/2}(\Omega_i)=\text{Cap}_{2}(\Omega_i\times\{0\}), \ \ \ \ \ i=1,2,\lambda.
\end{equation}
Together with \eqref{B1}, \eqref{B2} and\eqref{B4}, we have
\begin{equation*}
\text{Cap}_{2}(\Omega^r_\lambda)^{1/(n-1)}
=\lambda\text{Cap}_{2}(\Omega^r_1)^{1/(n-1)}
+(1-\lambda)\text{Cap}_{2}(\Omega^r_2)^{1/(n-1)}.
\end{equation*}
By \cite[Theorem 1]{CS03}, we can deduce that $\Omega^r_1$ and $\Omega^r_2$ are homothetic for $r\in(0,1)$. Let $r\rightarrow 1$ we have $\Omega_1$ and $\Omega_2$ are homothetic.
\end{proof}

The following Theorem is the strengthened version of Theorem \ref{T1}, its proof is similar to Theorem \ref{T1}. In this paper, we only prove Theorem \ref{T1}.
\begin{theo}\label{T3}
Let $\Omega$ be a bounded domain in $\R^n$ (not necessarily convex) and $u$ be the solution of (\ref{PDE1}). If $u$ has three convex super level sets
\[
\Omega(r)=\{x\in \R^n:\ u(x)\geq r\}, \ \ \Omega(s)=\{x\in \R^n:\ u(x)\geq s\},
\]
\[
\Omega(t)=\{x\in \R^n:\ u(x)\geq t\},
\]
($0<s<r\leq 1$) such that
\[
t^{1/(1-n)}\geq [(1-\lambda)r^{1/(1-n)}+\lambda s^{1/(1-n)}],
\]
and
\[
\Omega(t)\supseteq (1-\lambda)\Omega(r)+\lambda \Omega(s),
\]
for some $\lambda \in (0,1)$, then $\Omega$ is a ball.
\end{theo}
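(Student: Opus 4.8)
The plan is to run, for a single triple of levels, the scheme that underlies the equality case of Theorem \ref{T1}: use the two hypotheses to force the Brunn--Minkowski inequality of Theorem \ref{BMI} to become an equality for $\Omega(r)$ and $\Omega(s)$, deduce that these two level sets are homothetic, and then invoke Theorem \ref{T2}.

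First I would record the scaling of the prescribed level sets. Since $\Omega(r),\Omega(s),\Omega(t)$ are convex by hypothesis, $u$ is continuous, and $u(x)\to 0$ as $|x|\to\infty$, each $\Omega(\tau)$ with $\tau\in\{r,s,t\}$ is a convex body; it contains $\Omega$ because $\tau\le 1$, so it has non-empty interior. From $\tau\le 1$ we also get $\R^n\setminus\Omega(\tau)\subseteq\R^n\setminus\Omega$, hence $(-\Delta)^{1/2}(\tau^{-1}u)=0$ on $\R^n\setminus\Omega(\tau)$, and therefore $\tau^{-1}u$ agrees with the potential $u_\tau$ of $\Omega(\tau)$ outside $\Omega(\tau)$. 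Inserting this into the asymptotic identity \eqref{B5} applied to the convex body $\Omega(\tau)$ yields
\[
\mathrm{Cap}_{1/2}(\Omega(\tau))=\frac{C}{\tau},\qquad \tau\in\{r,s,t\},
\]
with one and the same $C:=\lim_{|x|\to\infty}u(x)|x|^{n-1}>0$ for all three levels. This is exactly the relation already exploited in Section \ref{Se3}, and I stress that here only the $\Omega(\tau)$'s --- not $\Omega$ itself --- need be convex.

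Next I would combine monotonicity of $\mathrm{Cap}_{1/2}$ with the Brunn--Minkowski inequality. Writing $K:=(1-\lambda)\Omega(r)+\lambda\Omega(s)$, a convex body, the inclusion $\Omega(t)\supseteq K$ gives $\mathrm{Cap}_{1/2}(\Omega(t))\ge\mathrm{Cap}_{1/2}(K)$ by monotonicity of capacity, while Theorem \ref{BMI} gives
\[
\mathrm{Cap}_{1/2}(K)^{1/(n-1)}\ \ge\ (1-\lambda)\,\mathrm{Cap}_{1/2}(\Omega(r))^{1/(n-1)}+\lambda\,\mathrm{Cap}_{1/2}(\Omega(s))^{1/(n-1)}.
\]
Chaining these two, substituting the scaling identity and dividing by $C^{1/(n-1)}>0$, the right-hand side becomes $(1-\lambda)r^{1/(1-n)}+\lambda s^{1/(1-n)}$ and one obtains
\[
t^{1/(1-n)}\ \ge\ (1-\lambda)r^{1/(1-n)}+\lambda s^{1/(1-n)}.
\]
Together with the hypothesis on $t^{1/(1-n)}$, which provides the opposite inequality and pins $t$ at the critical value $\bigl[(1-\lambda)r^{1/(1-n)}+\lambda s^{1/(1-n)}\bigr]^{1-n}$, every inequality in the chain must be an equality; in particular equality holds in the Brunn--Minkowski inequality \eqref{B1} for $\Omega(r)$ and $\Omega(s)$. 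By the equality part of Theorem \ref{BMI}, $\Omega(r)$ and $\Omega(s)$ are homothetic, so $u$ has two homothetic convex level sets and Theorem \ref{T2} yields at once that $\Omega$ is a ball.

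The argument is short, so there is no really serious obstacle; the two places to be careful are the scaling identity $\mathrm{Cap}_{1/2}(\Omega(\tau))=C/\tau$, which has to be justified with $\Omega$ merely a domain (it goes through, since \eqref{B5} is only applied to the convex bodies $\Omega(\tau)$ and the identity $u_\tau=\tau^{-1}u$ outside $\Omega(\tau)$ uses nothing but $\tau\le 1$), and the bookkeeping of the exponent $1/(1-n)$ and of the sense of the two inequalities, so that the Brunn--Minkowski chain genuinely collapses to equalities rather than merely turning out to be consistent. Once equality in Brunn--Minkowski is secured, Theorem \ref{T2} does the rest without further effort.
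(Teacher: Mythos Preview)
Your overall strategy is exactly the one the paper intends: the paper does not write out a separate proof of Theorem \ref{T3} but says it is ``similar to Theorem \ref{T1}'', and your proposal reproduces the T1 argument---scaling identity $\mathrm{Cap}_{1/2}(\Omega(\tau))=C/\tau$, monotonicity, Brunn--Minkowski, collapse to equality, then Theorem \ref{T2}---adapted to a single triple of levels, with the extra care that only the $\Omega(\tau)$'s need be convex.

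There is, however, a genuine sign problem in the step where you claim that the hypothesis ``provides the opposite inequality''. Chaining $\Omega(t)\supseteq K$ with Theorem \ref{BMI} and inserting $\mathrm{Cap}_{1/2}(\Omega(\tau))=C/\tau$ yields
\[
t^{1/(1-n)}\ \ge\ (1-\lambda)\,r^{1/(1-n)}+\lambda\,s^{1/(1-n)},
\]
which is the \emph{same} direction as the hypothesis $t^{1/(1-n)}\ge(1-\lambda)r^{1/(1-n)}+\lambda s^{1/(1-n)}$ stated in the theorem. Two inequalities pointing the same way do not force equality, so as written your chain does not collapse and you cannot invoke the equality case of Theorem \ref{BMI}. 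In the paper's proof of Theorem \ref{T1} this issue does not arise because there one \emph{sets} $t^{1/(1-n)}=(1-\lambda)r^{1/(1-n)}+\lambda s^{1/(1-n)}$ exactly, so the derived $\ge$ is automatically an equality. For Theorem \ref{T3} the argument you outline works verbatim only if the hypothesis reads $t^{1/(1-n)}\le(1-\lambda)r^{1/(1-n)}+\lambda s^{1/(1-n)}$ (or $=$); the $\ge$ printed in the statement appears to be a typo. You should not paper over this: either note the discrepancy and prove the corrected statement, or explain why the stated hypothesis nonetheless suffices---your current sentence asserting it gives ``the opposite inequality'' is simply false against the theorem as written.
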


\begin{proof}[\bf Proof of Theorem \ref{T1}.]
According to the case that $\Omega$ is a ball, we conclude that
\[
\alpha(\Omega)\leq \frac{1}{1-n}.
\]

Let $u$ be the solution of (\ref{PDE1}), assuming that $u^{1/(1-n)}$ is convex function in $\R^n$. By the monotonicity of the $\alpha$-concave function, that is, if $u$ is $\alpha$-concave, then for every $\beta\leq \alpha$, it is $\beta$-concave.
Let $\upsilon=u^{1/(1-n)}$, we know that $\upsilon$ is convex function in $\R^n$. Then, for any $\upsilon_0,\upsilon_1\in \R$ and $\mu\in (0,1)$, it holds that
\begin{equation}\label{B9}
\begin{aligned}
  \{x\in \R^n:\ \upsilon(x)\leq (1-\mu)\upsilon_0+\mu \upsilon_1 \} & \supseteq (1-\mu)\{ x\in \R^n: \upsilon(x)\leq \upsilon_0 \}\\
 &\quad +\mu \{ x\in \R^n:\ \upsilon(x)\leq \upsilon_1\}.
\end{aligned}
\end{equation}
Now, taking $r,s\in(0,1]$, fix $\lambda\in (0,1)$ and set
\[
t^{1/(1-n)}= (1-\lambda)r^{1/(1-n)}+\lambda s^{1/(1-n)}.
\]
By setting $\upsilon_0=r^{1/(1-n)}$ and $\upsilon_1=s^{1/(1-n)}$, we have
\begin{equation}\label{B10}
t^{1/(1-n)}=(1-\lambda)\upsilon_0+\lambda \upsilon_1,
\end{equation}
and
\begin{align*}
   & \Omega(r)=\{\upsilon\leq r^{1/(1-n)} \}, \\
   & \Omega(s)=\{\upsilon\leq s^{1/(1-n)} \}, \\
   & \Omega(t)=\{\upsilon\leq t^{1/(1-n)} \}.
\end{align*}
Then from (\ref{B9}), we have
\[
\Omega(t)\supseteq (1-\lambda)\Omega(r)+\lambda \Omega(s).
\]
By the monotonicity of fractional capacity with respect to set inclusion, we deduce
\[
\mathrm{Cap}_{1/2} [\Omega(t)]\geq \mathrm{Cap}_{1/2} [(1-\lambda)\Omega(r)+\lambda \Omega(s)].
\]
By the Brunn-Minkowski inequality for fractional capacity, i.e., Theorem \ref{BMI}, we obtain
\begin{equation}\label{B11}
\mathrm{Cap}_{1/2}(\Omega(t))^{1/(n-1)}\geq (1-\lambda) \mathrm{Cap}_{1/2}(\Omega(r))^{1/(n-1)}+\lambda\mathrm{Cap}_{1/2}( \Omega(s))^{1/(n-1)}.
\end{equation}
On the other hand, by
\begin{align*}
   & \mathrm{Cap}_{1/2}(\Omega(r))=r^{-1} \mathrm{Cap}_{1/2}(\Omega),\\
   & \mathrm{Cap}_{1/2}(\Omega(s))=s^{-1} \mathrm{Cap}_{1/2}(\Omega),
\end{align*}
and
\[
  \mathrm{Cap}_{1/2}(\Omega(t))=t^{-1} \mathrm{Cap}_{1/2}(\Omega).
\]
Then, combining with (\ref{B10}), (\ref{B11}), we have
\[
\mathrm{Cap}_{1/2}(\Omega(t))^{1/(n-1)}= (1-\lambda) \mathrm{Cap}_{1/2}(\Omega(r))^{1/(n-1)}+\lambda\mathrm{Cap}_{1/2}( \Omega(s))^{1/(n-1)}.
\]
Thus, the equality holds in the Brunn-Minkowski inequality for $\Omega(r)$ and $\Omega(s)$. By Theorem \ref{BMI}, we have $\Omega(r)$ and $\Omega(s)$ are homothetic. Thus, by Theorem \ref{T2}, $\Omega$ is a ball.
\end{proof}

\section*{Acknowledgement} The authors thank Prof. Yong Huang and Niufa Fang for their valued advices and comments on this subject.

\bibliographystyle{amsplain}

\end{document}